\newtheorem{theorem}{Theorem}[section]
\newtheorem{proposition}[theorem]{Proposition}
\theoremstyle{definition}
\newtheorem{definition}[theorem]{\bf Definition}
\newcommand{\darrow}{\!\downarrow}
\newcommand{\uarrow}{\!\uparrow}
\newcommand{\la}{\langle}
\newcommand{\ra}{\rangle}
\newcommand{\bigset}[1]{\big\{ #1 \big\}}
\newcommand{\restr}{\mbox{\raisebox{.5mm}{$\upharpoonright$}}}
\renewcommand{\leq}{\leqslant}
\newcommand{\vph}{\varphi}
\newcommand{\dom}{\mathrm{dom}}
\newcommand{\FPF}{\mathrm{FPF}}
\newcommand{\DNC}{\mathrm{DNC}}
\begin{document}

\title[Generalizations of the recursion theorem]
{Generalizations of the recursion theorem}

\author[S. A. Terwijn]{Sebastiaan A. Terwijn}
\address[Sebastiaan A. Terwijn]{Radboud University Nijmegen\\
Department of Mathematics\\
P.O. Box 9010, 6500 GL Nijmegen, the Netherlands.
} \email{terwijn@math.ru.nl}

\begin{abstract}
We consider two generalizations of the recursion theorem, namely 
Visser's ADN theorem and Arslanov's completeness criterion, 
and we prove a joint generalization of these theorems. 
\end{abstract}

\keywords{recursion theorem, ADN theorem, Arslanov completeness criterion}

\subjclass[2010]{%
03D25, 
03D28, 
03B40. 
}

\date{\today}

\maketitle

\section{Introduction}

The recursion theorem is a classic result in computability theory. 
It was found by S. C. Kleene in his study of the $\lambda$-calculus, 
and first appeared in~\cite{Kleene}. (It appears somewhat hidden, 
on p153, in the last paragraph of section~2.)  
In the following, $\vph_n$ denotes the $n$-th partial computable 
(p.c.) function, in some standard numbering of the p.c.\ functions. 

\begin{theorem} {\rm (The recursion theorem, Kleene~\cite{Kleene})}\label{recthm}
Let $f$ be a computable function. Then $f$ has a fixed point, 
i.e.\ there exists a number $e$ such that $\vph_{f(e)} = \vph_e$. 
\end{theorem}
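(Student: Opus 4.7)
The plan is to combine the s-m-n theorem with a self-application diagonal trick. Note that the statement asks only for a fixed point modulo the equivalence $\vph_a = \vph_b$ rather than a literal fixed point of $f$, so we have considerable freedom in choosing the witness $e$; this slack is exactly what makes diagonalization succeed where it would otherwise fail.

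First I would define the partial computable function of two variables
\[
\psi(x,y) = \vph_{f(\vph_x(x))}(y).
\]
This is partial computable since $f$ is total computable and the universal function is partial computable, with composition handled inside the universal simulator. By the s-m-n theorem there is a total computable function $h$ satisfying $\vph_{h(x)}(y) = \psi(x,y)$ for all $x,y$.

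Next, since $h$ is itself total computable, it has an index $v$ with $\vph_v = h$; I would fix such a $v$. Totality of $h$ guarantees that the diagonal value $\vph_v(v) = h(v)$ converges, which is the only convergence issue in the argument. Setting $e := h(v)$ and unwinding the definitions gives
\[
\vph_e(y) \;=\; \vph_{h(v)}(y) \;=\; \psi(v,y) \;=\; \vph_{f(\vph_v(v))}(y) \;=\; \vph_{f(h(v))}(y) \;=\; \vph_{f(e)}(y),
\]
which is the desired equality $\vph_e = \vph_{f(e)}$.

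I do not expect a serious obstacle here. The only point that warrants care is the apparent circularity of indexing $h$ by itself: this is legitimate because $h$ is an ordinary total computable function, so it has an ordinary $\vph$-index, and the self-application $\vph_v(v)$ converges automatically. All the real content of the proof is packaged into the clever double substitution $x \mapsto \vph_x(x)$ inside $\psi$, which internalizes the Cantor diagonal inside the computable-function machinery.
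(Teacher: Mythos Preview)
Your proof is correct and is precisely the standard s-m-n/diagonal argument for Kleene's recursion theorem. Note, however, that the paper does not actually prove this statement: Theorem~\ref{recthm} is merely quoted as a classical result, with a remark that the proof is ``very short, but somewhat enigmatic'' and a pointer to Owings' diagonalization viewpoint. So there is no in-paper proof to compare against; your argument is exactly the textbook one the paper alludes to.
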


The recursion theorem allows for certain kinds of circular definitions,
which gives the result an air of mystery.
The proof (translated from the $\lambda$-calculus) of this theorem is 
very short, but somewhat enigmatic. An illuminating way to view it, 
namely as a diagonalization argument that fails, was given by 
Owings~\cite{Owings}.

Kleene actually proved the following version of the recursion 
theorem, that we will use below. For a discussion of this second 
version see Moschovakis~\cite{Moschovakis}. 

\begin{theorem} {\rm (The recursion theorem with parameters, 
Kleene~\cite{Kleene})} \label{recthmparam}
Let $h(n,x)$ be a computable binary function. 
Then there exists a computable function $f$ such that for all~$n$, 
$\vph_{f(n)} = \vph_{h(n,f(n))}$.
\end{theorem}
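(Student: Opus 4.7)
The plan is to adapt the standard diagonal proof of the unparameterized recursion theorem (Theorem~\ref{recthm}), keeping careful track of the dependence on the parameter~$n$ so that the resulting fixed point is a computable function of~$n$ rather than just a single number. The two tools needed are the $s$-$m$-$n$ theorem (twice) and the usual self-application trick.

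First, consider the partial computable function of three variables
$$(n,e,x)\mapsto \vph_{h(n,\vph_e(e))}(x),$$
where this is undefined whenever $\vph_e(e)$ is undefined. By the $s$-$m$-$n$ theorem, there is a total computable function $s(n,e)$ such that
$$\vph_{s(n,e)}(x) \;=\; \vph_{h(n,\vph_e(e))}(x)$$
for all $n,e,x$.

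Next, since $x\mapsto s(n,x)$ is computable uniformly in~$n$, a second application of the $s$-$m$-$n$ theorem produces a total computable function $n\mapsto e_n$ with $\vph_{e_n}(x)=s(n,x)$ for all $n,x$. In particular $\vph_{e_n}(e_n)=s(n,e_n)$, which is the key self-referential identity. Now define $f(n)=s(n,e_n)$; this is computable as a composition of computable functions, and unwinding the definitions gives
$$\vph_{f(n)} \;=\; \vph_{s(n,e_n)} \;=\; \vph_{h(n,\vph_{e_n}(e_n))} \;=\; \vph_{h(n,s(n,e_n))} \;=\; \vph_{h(n,f(n))},$$
as required.

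There isn't really a hard step here: the only subtle point is, as in the ordinary recursion theorem, arranging the indices so that the substitution $\vph_{e_n}(e_n)=s(n,e_n)=f(n)$ closes up correctly. Once the two applications of $s$-$m$-$n$ are in place, the computation is purely formal.
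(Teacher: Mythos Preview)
Your proof is correct and is precisely the standard argument for the recursion theorem with parameters: two applications of the $s$-$m$-$n$ theorem followed by the diagonal self-application $\vph_{e_n}(e_n)$. There is nothing to criticize.

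However, note that the paper does \emph{not} give its own proof of Theorem~\ref{recthmparam}. It simply states the result and attributes it to Kleene~\cite{Kleene}, adding only the remark that the ordinary recursion theorem gives a fixed point of $h(n,\cdot)$ for each fixed $n$, and that the parameterized version asserts this holds uniformly in~$n$. So there is no comparison to be made: your argument supplies a proof where the paper deliberately omits one, and the proof you give is exactly the classical one that the citation to Kleene is pointing at.
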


Note that for every fixed $n$, the computable function $h(n,x)$ has 
a fixed point by the recursion theorem. The second version with parameters 
says that this holds {\em uniformly}. 

We will discuss two generalizations of the recursion theorem. 
First we discuss the ADN theorem, proved by Visser in 1980, with motivations
from $\lambda$-calculus, arithmetic provability, and the theory of numerations. 
We give a proof of the ADN theorem from the recursion theorem with parameters. 
Second, we discuss Arslanov's completeness criterion from 1977/1981, which 
generalizes the recursion theorem from computable functions to 
$A$-computable functions, for any set $A$ of incomplete c.e.\ degree. 
We then proceed to prove a joint generalization of these two theorems. 

Our notation from computability theory is mostly standard. 
Partial computable (p.c.) functions are denoted by lower case 
Greek letters, and (total) computable functions by lower case 
Roman letters. 
$\omega$ denotes the natural numbers, 
$\vph_e$ denotes the $e$-th p.c.\ function, 
and $W_e$ denotes the domain of $\vph_e$. 
We write $\vph_e(n)\darrow$ if this computation is defined, 
and $\vph_e(n)\uarrow$ otherwise. 
We let $\la e,n\ra$ denote a computable pairing function. 
$\emptyset'$ denotes the halting set. 
For unexplained notions we refer to Odifreddi~\cite{Odifreddi} or 
Soare~\cite{Soare}.

\section{Diagonally noncomputable and fixed point free functions}

A function $f$ is called {\em fixed point free}, or simply FPF, 
if $W_{f(n)}\neq W_n$ for every~$n$.
Note that by the recursion theorem no FPF function is computable. 
We will also consider {\em partial\/} functions without fixed points. 
Extending the above definition, we call a partial function $\delta$ 
FPF if for every $n$, 
\begin{equation}
\delta(n)\darrow \; \Longrightarrow \; W_{\delta(n)}\neq W_n. 
\end{equation}

A function $g$ is called {\em diagonally noncomputable\/}, or DNC, 
if $g(e)\neq \vph_e(e)$ for every~$e$.

We are interested in the Turing degrees of sets computing FPF or DNC functions.
It is well-known that these coincide:

\begin{proposition} {\rm (Jockusch et al.\ \cite{Jockuschetal})} \label{equivalence}
The following are equivalent for any set $A$:
\begin{enumerate}[\rm (i)]

\item $A$ computes a $\FPF$ function, 

\item $A$ computes a $\DNC$ function, 

\item $A$ computes a function $f$ such that 
$\vph_{f(e)} \neq \vph_e$ for every~$e$.

\end{enumerate}

\end{proposition}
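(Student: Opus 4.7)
My plan is to establish the cycle (i) $\Rightarrow$ (iii) $\Rightarrow$ (ii) $\Rightarrow$ (i), at each step manufacturing the new $A$-computable witness by composing the previous one with a computable function supplied by the $s$-$m$-$n$ theorem or the recursion theorem with parameters (Theorem~\ref{recthmparam}). The implication (i) $\Rightarrow$ (iii) is a one-line observation: if $W_{f(n)} \neq W_n$, then $\varphi_{f(n)}$ and $\varphi_n$ have distinct domains and are therefore unequal as partial functions.

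For (iii) $\Rightarrow$ (ii), the plan is to apply the $s$-$m$-$n$ theorem to the partial computable function $(e,x) \mapsto \varphi_{\varphi_e(e)}(x)$, obtaining a total computable $k$ with $\varphi_{k(e)} = \varphi_{\varphi_e(e)}$ whenever $\varphi_e(e) \darrow$, and $\varphi_{k(e)}$ empty otherwise. I would then take $g := f \circ k$. To verify that $g$ is $\DNC$, I would argue: if $\varphi_e(e) \darrow = a$, then $\varphi_{k(e)} = \varphi_a$, and the hypothesis $\varphi_{f(k(e))} \neq \varphi_{k(e)}$ rules out $f(k(e)) = a$ (since equality would force $\varphi_{f(k(e))} = \varphi_a = \varphi_{k(e)}$); hence $g(e) \neq a = \varphi_e(e)$. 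The case $\varphi_e(e) \uarrow$ is immediate.

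The main obstacle is (ii) $\Rightarrow$ (i). Given an $A$-computable $\DNC$ function $g$, the naive attempt $f(n) := g(k(n))$ with $k$ a padding function satisfying $\varphi_{k(n)} = \varphi_n$ fails, because the $\DNC$ inequality $g(k(n)) \neq \varphi_n(k(n))$ only forbids a single \emph{value} of $g$, whereas $\FPF$ requires $g(k(n))$ not to be \emph{any} c.e.\ index of $W_n$. To bridge this gap my plan is to pick $k$ by the recursion theorem with parameters (Theorem~\ref{recthmparam}) applied to a suitable computable binary $h(n,y)$, obtaining $k$ with $\varphi_{k(n)} = \varphi_{h(n,k(n))}$. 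The intended design is that the self-referential equation defining $k$ forces $\varphi_{k(n)}(k(n))$ to coincide with a designated index of $W_n$ in such a way that the single $\DNC$ inequality at $k(n)$ propagates to the full non-equality $W_{g(k(n))} \neq W_n$.

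The delicate part of the argument, which will occupy most of the effort, is constructing $h$ so that the reflexive identification rules out every index of $W_n$ rather than just the one explicitly named; I expect this to involve an s-m-n-style gadget inside $h$ that mixes $\varphi_n$ with the index computed by $h(n,\cdot)$ itself, so that any agreement $W_{g(k(n))} = W_n$ would already contradict $g(k(n)) \neq \varphi_{k(n)}(k(n))$.
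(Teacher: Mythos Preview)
The paper does not actually prove this proposition; it only cites Downey and Hirschfeldt for a proof. So there is nothing in the paper to compare your argument against, and your attempt has to stand on its own. Your implications (i)~$\Rightarrow$~(iii) and (iii)~$\Rightarrow$~(ii) are correct and standard.

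The plan for (ii)~$\Rightarrow$~(i), however, has a real gap. You keep the shape $f = g\circ k$ and hope that a self-referential choice of $k$ via Theorem~\ref{recthmparam} will upgrade the single inequality $g(k(n))\neq\vph_{k(n)}(k(n))$ to the conclusion $W_{g(k(n))}\neq W_n$. But $\vph_{k(n)}(k(n))$ is one fixed value (determined by a computable process in $n$), whereas you need $g(k(n))$ to avoid \emph{every} index of $W_n$. For your contrapositive to go through you would need $\vph_{k(n)}(k(n))$ to equal $g(k(n))$ whenever $W_{g(k(n))}=W_n$; since $g$ is only $A$-computable and the relation $W_m=W_n$ is $\Pi^0_2$, no computable $h$ fed into Theorem~\ref{recthmparam} can arrange this. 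No amount of ``mixing $\vph_n$ with the index computed by $h(n,\cdot)$'' changes the fact that a single $\DNC$ inequality forbids exactly one value.

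The missing idea is a post-composition that collapses the problem to a single forbidden value. Take a computable $k$ with $\vph_{k(n)}(x)$ equal to the first element enumerated into $W_n$ (undefined if $W_n=\emptyset$), and a computable $s$ with $W_{s(m)}=\{m\}$. Set $f(n):=s(g(k(n)))$, so $W_{f(n)}=\{g(k(n))\}$. If $|W_n|\neq 1$ then $W_{f(n)}\neq W_n$ trivially; if $W_n=\{a\}$ then $\vph_{k(n)}(k(n))=a$, hence $g(k(n))\neq a$ by $\DNC$, and again $W_{f(n)}\neq W_n$. This uses only the $s$-$m$-$n$ theorem, not Theorem~\ref{recthmparam}.
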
 

A proof of Proposition~\ref{equivalence} can be found in 
Downey and Hirschfeldt~\cite[p87]{DH}.
Degrees of $\{0,1\}$-valued DNC functions are also called PA-degrees, 
as they coincide with degrees of complete extensions of Peano arithmetic, 
cf.\ Jockusch and Soare~\cite{JockuschSoare1972a}, \cite[p89]{DH}.

The paper Kjos-Hanssen, Merkle, and Stephan \cite{KHMS} 
contains a discussion linking various classes of DNC functions 
to Kolmogorov complexity.
Since by Theorem~\ref{Arslanov} below (in combination with 
Proposition~\ref{equivalence}), the recursion theorem fails 
for a c.e.\ Turing degree if and only if it contains a DNC function, 
this also provides a link between the recursion theorem 
and Kolmogorov complexity.

\section{The ADN theorem} \label{sec:ADN}

Motivated by the $\lambda$-calculus, arithmetic provability, and 
the theory of numerations, 
Visser~\cite{Visser} proved the following generalization of the 
recursion theorem, called the ADN theorem. 
ADN theorem stands for ``anti diagonal normalization theorem''. 
Applications of this theorem are discussed in 
Visser (for the $\lambda$-cal\-culus and numerations) 
and Barendregt~\cite{Barendregt}, 
(in which the ADN theorem is used to prove a result of Statman's).
See also the discussion on computably enumerable 
equivalence relations and arithmetic provability in 
Bernardi and Sorbi~\cite{BernardiSorbi} and  
Montagna and Sorbi~\cite{MontagnaSorbi}.

It should be noted that the ADN theorem was originally formulated 
by Visser in a more general form, namely for arbitrary precomplete 
numberings, in the sense of Ershov~\cite{Ershov}. 
(Ershov also formulated a more general version of Theorem~\ref{recthm}
in this context.) We refrain from discussing this 
more general form here, but simply note that the version below
corresponds to the case of a standard numbering of the partial 
computable functions.

\begin{theorem} {\rm (ADN theorem, Visser~\cite{Visser})} \label{ADN}
Suppose that $\delta$ is a partial computable fixed point free
function. Then for every partial computable function $\psi$ there
exists a computable function $f$ such that for every $n$,
\begin{align}
\psi(n)\darrow \; &\Longrightarrow \; W_{f(n)}= W_{\psi(n)} \label{totalize} \\
\psi(n)\uarrow \; &\Longrightarrow \; \delta(f(n))\uarrow   \label{avoid}
\end{align}
\end{theorem}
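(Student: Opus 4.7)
The plan is to use the recursion theorem with parameters (Theorem~\ref{recthmparam}), together with a ``race'' construction that exploits the FPF property of $\delta$ to force the desired behaviour.

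First, I would use the $s$-$m$-$n$ theorem to define a computable binary function $h(n,x)$ such that $\varphi_{h(n,x)}(y)$ behaves as follows. On input $y$, search in stages for the least $s$ at which either $\psi(n)\darrow_s$ or $\delta(x)\darrow_s$. If such an $s$ is found and $\psi(n)\darrow_s$ (with $\delta(x)$ not yet converged by stage $s$), then output $\varphi_{\psi(n)}(y)$. If instead $\delta(x)\darrow_s$ first (with $\psi(n)$ not yet converged), then output $\varphi_{\delta(x)}(y)$. If neither ever converges, then $\varphi_{h(n,x)}(y)\uarrow$. Thus $W_{h(n,x)}$ is $W_{\psi(n)}$, $W_{\delta(x)}$, or $\emptyset$ depending on whose computation wins the race (if any).

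Next, I would invoke Theorem~\ref{recthmparam} to obtain a total computable $f$ satisfying $\varphi_{f(n)}=\varphi_{h(n,f(n))}$, hence in particular $W_{f(n)}=W_{h(n,f(n))}$ for every $n$. The remainder of the proof is a case analysis on whether $\psi(n)\darrow$. If $\psi(n)\uarrow$ and nevertheless $\delta(f(n))\darrow$, then $\delta(f(n))$ wins the race, so $W_{f(n)}=W_{\delta(f(n))}$, contradicting the FPF property of $\delta$; therefore $\delta(f(n))\uarrow$, which is~\eqref{avoid}. Symmetrically, if $\psi(n)\darrow$ but $\delta(f(n))$ happens to converge at an earlier stage than $\psi(n)$, then again $W_{f(n)}=W_{\delta(f(n))}$, again contradicting FPF; hence in this case $\psi(n)$ must win the race, giving $W_{f(n)}=W_{\psi(n)}$, which is~\eqref{totalize}.

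The main subtlety is the simultaneous handling of the two cases: I cannot decide uniformly whether $\psi(n)$ converges, so the definition of $h$ must commit to a behaviour on the basis of finite-stage information only, and I rely on the FPF hypothesis to rule out, after the fact, those branches of the race that would spoil the conclusion. Packaging this race correctly (and arguing that ``$\delta(x)$ wins'' is itself a c.e.\ event, so $h$ really is computable via $s$-$m$-$n$) is the only non-routine step; the rest is a clean two-line case analysis.
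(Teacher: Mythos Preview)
Your proposal is correct and is essentially identical to the paper's proof: both apply the recursion theorem with parameters to a race between $\psi(n)$ and $\delta(f(n))$, and then invoke the FPF hypothesis to rule out the branch in which $\delta$ wins. The paper's presentation is terser (it absorbs the $s$-$m$-$n$ step into the invocation of Theorem~\ref{recthmparam}), but the construction and the two-case analysis are the same.
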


\begin{definition}
In the following, if \eqref{totalize} holds for every~$n$, we will say 
that {\em $f$ totalizes $\psi$\/}. If both \eqref{totalize} and 
\eqref{avoid} hold, we will say that 
{\em $f$ totalizes $\psi$ avoiding~$\delta$}. 
\end{definition}

Note that the ADN theorem implies the recursion theorem: 
The function $\delta$ cannot be total, for otherwise $f(n)$ could 
not exist when $\psi(n)\uarrow$. 
It follows that there can be no computable FPF function. 
By Proposition~\ref{equivalence} this is equivalent to the 
statement of the recursion theorem.

Visser's original proof of the ADN theorem mimicked the proof of the 
recursion theorem. Here we prove the result directly from the 
recursion theorem with parameters, resulting in the following
short proof. 

\medskip\noindent
{\em Proof of Theorem~\ref{ADN}.}
By the recursion theorem with parameters (Theorem~\ref{recthmparam}), 
we can define a computable function $f$ such that 
$$
\vph_{f(n)}=
\begin{cases}
\vph_{\delta(f(n))} & \text{if $\delta(f(n))\darrow$ before $\psi(n)\darrow$}, \\
\vph_{\psi(n)} & \text{if $\psi(n)\darrow$ before $\delta(f(n))\darrow$}, \\
\uparrow & \text{otherwise.}
\end{cases}
$$
Now the first case can never obtain, since $\delta$ is FPF. 
Hence, if $\psi(n)\darrow$ then $W_{f(n)} = W_{\psi(n)}$, 
and if $\psi(n)\uarrow$ then $\delta(f(n))\uarrow$, 
so $f$ totalizes $\psi$ avoiding $\delta$.
\qed

\medskip
We note that by taking $\psi$ in Theorem~\ref{ADN} universal, 
we see that the following uniform version holds:

\begin{theorem} {\rm (ADN theorem, uniform version)} \label{ADNuniform}
Suppose that $\delta$ is a partial computable fixed point free
function. Then there exists a computable function $f$
such that for every $e$ the function 
$f(\la e,n\ra)$ totalizes $\vph_e$ avoiding $\delta$. 
\end{theorem}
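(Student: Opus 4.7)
The plan is to obtain Theorem~\ref{ADNuniform} as an immediate corollary of Theorem~\ref{ADN} applied to a universal partial computable function, exactly as the remark preceding the statement suggests. First I fix a universal partial computable function $\psi$, that is, a p.c.\ function satisfying $\psi(\la e,n\ra) = \vph_e(n)$ for all $e,n$; such a $\psi$ exists by the standard enumeration of the p.c.\ functions.

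Next I apply Theorem~\ref{ADN} to this $\psi$ together with the given FPF partial computable $\delta$. This yields a single computable function $f$ such that for every natural number $m$,
\begin{align*}
\psi(m)\darrow \; &\Longrightarrow \; W_{f(m)} = W_{\psi(m)}, \\
\psi(m)\uarrow \; &\Longrightarrow \; \delta(f(m))\uarrow.
\end{align*}
Specializing to $m = \la e,n\ra$ and using universality of $\psi$, I get
\begin{align*}
\vph_e(n)\darrow \; &\Longrightarrow \; W_{f(\la e,n\ra)} = W_{\vph_e(n)}, \\
\vph_e(n)\uarrow \; &\Longrightarrow \; \delta(f(\la e,n\ra))\uarrow.
\end{align*}
Thus, for every $e$, the computable function $n \mapsto f(\la e,n\ra)$ totalizes $\vph_e$ avoiding $\delta$, which is exactly the conclusion.

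There is no real obstacle here: the uniformity is built in because the $f$ delivered by Theorem~\ref{ADN} is a single computable function of its argument, and the only thing being done is reading off the uniformity already present in the non-uniform statement by feeding it a universal input. The only point worth double-checking is that Theorem~\ref{ADN} is indeed stated with a fixed $\psi$ yielding one $f$ (which it is), so no further uniformity lemma is required.
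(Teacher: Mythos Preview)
Your proof is correct and follows essentially the same approach as the paper: apply Theorem~\ref{ADN} to a universal p.c.\ function $\psi(\la e,n\ra)=\vph_e(n)$ and read off the uniform statement by specializing to $m=\la e,n\ra$.
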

\begin{proof}
Suppose that $\psi(\la e,n\ra) = \vph_e(n)$ is universal. 
By Theorem~\ref{ADN}, there exists a computable $f$ that 
totalizes $\psi$ avoiding $\delta$. 
But then, for fixed $e$, $f(\la e,n\ra)$ totalizes $\vph_e$ 
avoiding $\delta$:
$$
W_{f(\la e,n\ra)} = W_{\psi(\la e,n\ra)} = W_{\vph_e(n)}
$$
whenever $\vph_e(n)\darrow$, and 
$\delta(f(\la e,n\ra))\uarrow$ if $\vph_e(n)\uarrow$.
\end{proof}

In other words, if we can totalize a universal $\psi$ avoiding $\delta$, 
we can uniformly do the same for all $\vph_e$.

\section{Arslanov's completeness criterion}

Let $\emptyset'$ denote the halting set. Obviously, $\emptyset'$ can 
compute a FPF function. By the low basis theorem \cite{JockuschSoare1972b}, 
there exist FPF functions of low degree. However, these cannot have 
c.e.\ degree. 
On the other hand, by Ku\v{c}era~\cite{Kucera}, any FPF degree below 
$\emptyset'$ bounds a noncomputable c.e.\ degree. 
The Arslanov completeness criterion says that for 
c.e.\ degrees, containing a FPF function characterizes Turing 
completeness. Namely, if $A$ is a Turing incomplete c.e.\ set, then 
$A$ does not compute a FPF function. 
This shows that the recursion theorem holds for any function~$f$ that 
is computable by some incomplete c.e.\ set $A$, rather than just the 
computable ones. Arslanov proved this 
elegant characterization by building on work of Martin and Lachlan, 
cf.\ Soare~\cite[p88]{Soare}.

\begin{theorem} {\rm (Arslanov completeness criterion 
\cite{Arslanov}, \cite{ANS})} 
\label{Arslanov}
Suppose $A$ is c.e.\ and $A <_T \emptyset'$. 
If $f$ is an $A$-computable function, then $f$ has a fixed point, 
i.e.\ there exists $e\in\omega$ such that $W_{f(e)} = W_e$.
\end{theorem}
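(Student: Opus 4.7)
My plan is to argue by contrapositive: assume $f$ is an $A$-computable FPF function, and derive $\emptyset' \leq_T A$, contradicting $A <_T \emptyset'$.

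Write $f = \Phi^A$ and fix an enumeration $A = \bigcup_s A_s$. This yields a natural computable approximation $f_s(e) = \Phi^{A_s}_s(e)$ with $\lim_s f_s(e) = f(e)$. Crucially, its modulus of convergence is $A$-computable: using $A$, one first computes the use $u$ of $\Phi^A(e)$ and then finds the least stage $t$ at which $A_t\restr u = A\restr u$ and $\Phi^{A_t}_t(e)\darrow$, so that $f_s(e) = f(e)$ for every $s \geq t$.

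Next, I apply Theorem~\ref{recthmparam} to a suitable computable binary function to produce a computable $g$ such that $\vph_{g(n)}$, on any input $x$, searches for the least stage $s$ with $n \in \emptyset'_s$ and $f_s(g(n))\darrow$, and once such an $s$ is found, simulates $\vph_{f_s(g(n))}(x)$. Thus $W_{g(n)} = \emptyset$ if no such stage exists, and $W_{g(n)} = W_{f_s(g(n))}$ for the witnessing $s$ otherwise.

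Finally, I claim that $\emptyset'$ is $A$-decidable via the following procedure: given $n$, use $A$ to compute the stabilization stage $t_n$ for $g(n)$ as in the second paragraph, and answer \emph{yes} iff $n \in \emptyset'_{t_n}$. The key correctness point, which is where the FPF hypothesis really gets used, is this: if $n \in \emptyset'$ yet $n \notin \emptyset'_{t_n}$, then $n$ enters $\emptyset'$ at some stage $s_0 > t_n$; the search inside $\vph_{g(n)}$ then returns $s_0$ as its witness, and since $f_{s_0}(g(n)) = f(g(n))$ by the choice of $t_n$, this forces $W_{g(n)} = W_{f(g(n))}$, contradicting that $f$ is FPF. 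Hence $n \in \emptyset'$ iff $n \in \emptyset'_{t_n}$, yielding $\emptyset' \leq_T A$. The main obstacle I anticipate is packaging the self-referential definition of $\vph_{g(n)}$ cleanly within the format of Theorem~\ref{recthmparam}; this should just be a matter of writing the procedure as a computable binary function $h(n,e)$ (with $e$ standing in for the candidate index) and invoking the theorem to obtain $g$ with $\vph_{g(n)} = \vph_{h(n,g(n))}$.
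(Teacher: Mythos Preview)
Your argument is correct and is essentially the classical proof of Arslanov's criterion (the one the paper alludes to by citing Soare~\cite[p88]{Soare}): build $g$ via the recursion theorem with parameters so that $W_{g(n)}$ copies $W_{f_s(g(n))}$ once $n$ enters $\emptyset'$, and use the $A$-computable modulus of the approximation $f_s=\Phi_s^{A_s}$ to reduce $\emptyset'$ to~$A$. One small point worth tightening in the write-up: when you say the search inside $\vph_{g(n)}$ ``returns $s_0$ as its witness,'' you are implicitly using that $f_{s_0}(g(n))\!\downarrow$; this is fine because $s_0>t_n$ and $t_n$ was chosen so that $f_s(g(n))$ has stabilized for $s\geq t_n$, but it is worth saying explicitly.

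Note, however, that the paper does \emph{not} give its own proof of Theorem~\ref{Arslanov}; it is stated as a known result. The paper's original contribution is Theorem~\ref{joint}, from which Arslanov's criterion follows as the special case where $\delta$ is total. The proof of Theorem~\ref{joint} shares the same skeleton as yours---construct indices via the recursion theorem with parameters, and exploit that permanence of an $A$-computation can be verified from~$A$ because $A$ is c.e.---but is considerably more elaborate: since $\delta$ is only partial, one cannot simply wait for a true value $\delta(f(n))$; instead the construction must repeatedly freeze and unfreeze a function that is simultaneously trying to totalize a fixed~$\psi$, using a search order in which every number recurs infinitely often to avoid being blocked by nonpermanent computations. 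Your direct argument is the right way to prove Theorem~\ref{Arslanov} on its own; the paper's longer construction is the price of handling partial $\delta$ and the ADN-style avoidance condition at the same time.
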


This result also holds for other notions of reduction, 
such as m- and wtt-reducibility (cf.\ \cite[p308,338]{Odifreddi}). 

Note the following analogous results for the PA-degrees mentioned above:
Again, by the low basis theorem, PA-degrees can be low, but by 
Jockusch and Soare~\cite{JockuschSoare1972a}, a c.e.\ PA-degree must 
be complete. 

The Arslanov completeness criterion has been extended in various ways, 
by considering relaxations of the type of fixed point, e.g.\ 
instead of requiring $W_{f(e)} = W_e$, 
requiring merely that 
$W_{f(e)}$ is a finite variant of~$W_e$ (Arslanov), 
or that they are Turing equivalent (Arslanov), 
or that the $n$-th jumps of these sets are Turing equivalent (Jockusch). 
In this way the completeness criterion can be extended to all levels 
of the arithmetical hierarchy.
For a discussion of these results we refer the reader to 
Soare \cite[p270 ff]{Soare} and 
Jockusch, Lerman, Soare, and Solovay~\cite{Jockuschetal}.
The latter paper also contains an extension of Theorem~\ref{Arslanov} 
from c.e.\ degrees to d.c.e.\ degrees. 

By the following proposition, for a given FPF function $\delta$,
we may assume without loss of generality that $\delta$ is
defined on the set $\bigset{a \mid W_a\neq \emptyset}$.
In particular, we see that the difficulty of producing a total 
FPF function lies in the set of c.e.\ codes for the empty set.
This will also play a role in the proof of Theorem~\ref{joint}
below.

\begin{proposition} \label{domain}
Suppose $\delta$ is a partial $\FPF$ function.
Then $\delta$ computes a partial $\FPF$ function $\hat\delta$ 
such that $\bigset{a \mid W_a\neq \emptyset}\subseteq \dom(\hat\delta)$.
\end{proposition}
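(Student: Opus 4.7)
The plan is to exploit a simple observation: whenever $W_a\neq \emptyset$, the empty set itself already differs from $W_a$, so mapping such an $a$ to a fixed code for the empty set produces a non-fixed-point for free. The role of $\delta$ is then only to preserve its own behaviour on the inputs where it was already defined.

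Concretely, fix any index $b$ with $W_b=\emptyset$. Define $\hat\delta(a)$ by a dovetailed computation: in parallel, run the oracle query $\delta(a)$ and enumerate~$W_a$. At the first stage $s$ at which one of the two events (i)~$\delta(a)\darrow$, or (ii)~some element enters~$W_a$, occurs, commit to an output: in case (i) set $\hat\delta(a):=\delta(a)$, and in case (ii) set $\hat\delta(a):=b$; break ties in favour of (i) if both events happen at stage~$s$. If neither event ever occurs, leave $\hat\delta(a)$ undefined. The procedure is plainly $\delta$-computable.

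The verification then splits into two pieces. For the domain condition, if $W_a\neq \emptyset$ then some element is eventually enumerated into $W_a$, so at the latest at that stage one of the two triggering events occurs and $\hat\delta(a)\darrow$; hence $\{a\mid W_a\neq\emptyset\}\subseteq \dom(\hat\delta)$. For the FPF property, in case~(i) we have $W_{\hat\delta(a)}=W_{\delta(a)}\neq W_a$ simply because $\delta$ is FPF, while in case~(ii) we have already witnessed an element of $W_a$, so $W_{\hat\delta(a)}=W_b=\emptyset\neq W_a$. There is no serious obstacle in this argument; the only point of care is committing to a single output at the first triggering stage, and the dovetailing handles this automatically.
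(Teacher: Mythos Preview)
Your proof is correct and essentially identical to the paper's: fix a code for the empty set, dovetail the computation of $\delta(a)$ against the enumeration of $W_a$, and commit to $\delta(a)$ or the empty-set code according to which event fires first. The only differences are cosmetic (your explicit description of the dovetailing and tie-breaking versus the paper's ``before'' convention).
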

\begin{proof}
Fix a code $d$ such that $W_d = \emptyset$, and define
\[
\hat\delta(a) =
\begin{cases}
\delta(a) & \text{if $\delta(a)\darrow$ before $W_a\neq\emptyset$}, \\
d    & \text{if $W_a\neq\emptyset$ before $\delta(a)\darrow$}, \\
\uparrow & \text{otherwise.}  
\end{cases}  
\] 
Cleary, $\hat\delta$ is FPF, and defined on 
$\bigset{a \mid W_a\neq \emptyset}$.
\end{proof}

\section{A joint generalization}

In this section we prove a joint generalization of the ADN theorem 
and Arslanov's completeness criterion. The latter generalizes the 
recursion theorem from computable functions to 
arbitrary functions bounded by an incomplete c.e.\ Turing degree. 
The ADN theorem is a statement about partial computable FPF 
functions~$\delta$. We show that this can also be generalized 
to partial $A$-computable FPF functions $\delta$, 
with $A$ c.e.\ and Turing incomplete. 
This results in a statement simultaneously generalizing  
Arslanov's completeness criterion and the ADN theorem. 

\begin{theorem} {\rm (Joint generalization)} \label{joint}
Suppose $A$ is a c.e.\ set such that $A <_T \emptyset'$. 
Suppose that $\delta$ is a partial $A$-computable fixed point free 
function. Then for every partial computable function $\psi$ there 
exists a computable function $f$ totalizing $\psi$ avoiding $\delta$, 
i.e.\ such that for every~$n$, 
\begin{align}
\psi(n)\darrow \; &\Longrightarrow \; W_{f(n)}= W_{\psi(n)} \label{totalize2} \\
\psi(n)\uarrow \; &\Longrightarrow \; \delta(f(n))\uarrow   \label{avoid2}
\end{align}
\end{theorem}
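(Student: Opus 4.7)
The plan is to combine the proof of the ADN theorem (Theorem~\ref{ADN}) with the proof technique of Arslanov's completeness criterion (Theorem~\ref{Arslanov}), by replacing the partial computable function $\delta$ with a stage-by-stage computable approximation to the $A$-computable $\delta$ and invoking the hypothesis $A <_T \emptyset'$ to rule out mismatches between the approximation and $\delta$. Using Proposition~\ref{domain} relativized to $A$, I may assume without loss of generality that $\delta$ is defined on every code of a non-empty c.e.\ set.

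Fix an oracle Turing machine $\Phi$ with $\delta = \Phi^A$ and a c.e.\ enumeration $(A_s)$ of $A$. Let $\delta_s(x) := \Phi^{A_s}_s(x)$, which is uniformly computable in $x$ and $s$; if $\delta(x)\darrow$ then $\delta_s(x) = \delta(x)$ for all sufficiently large $s$. I would apply the recursion theorem with parameters (Theorem~\ref{recthmparam}) to obtain a computable $f$ such that
\[
\vph_{f(n)} =
\begin{cases}
\vph_{\delta_s(f(n))} & \text{if some stage $s$ with $\delta_s(f(n))\darrow$ appears before $\psi(n)\darrow$,}\\
\vph_{\psi(n)} & \text{if $\psi(n)\darrow$ appears first,}\\
\uparrow & \text{otherwise.}
\end{cases}
\]
This is exactly the race used in the proof of Theorem~\ref{ADN}, with the computable approximation $\delta_s$ playing the role of $\delta$.

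The straightforward cases of the analysis go as in the ADN proof. If $\psi(n)$ wins the race, then $\vph_{f(n)} = \vph_{\psi(n)}$ and \eqref{totalize2} holds; and if no event ever fires, then $W_{f(n)} = \emptyset$, and necessarily $\delta(f(n))\uarrow$, because otherwise $\delta_s(f(n))$ would converge at some stage and fire the race, so \eqref{avoid2} holds. If some $\delta_{s_0}(f(n))\darrow = y$ wins the race and the approximation is in fact stable, i.e.\ $\delta(f(n)) = y$, then $W_{f(n)} = W_{\delta(f(n))}$, contradicting that $\delta$ is FPF; so this sub-case is impossible. The main obstacle, and the place where the hypothesis $A <_T \emptyset'$ is essential, is the remaining sub-case, in which $\delta_{s_0}$ fires with a value later invalidated by a new element of $A$ entering below the use of $\Phi^{A_{s_0}}(f(n))$. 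Here the proof technique of Arslanov's completeness criterion is brought in: a systematic occurrence of such invalidations along the diagonal sequence of codes $f(n)$ would allow one to compute $\emptyset'$ from $A$, contradicting $A <_T \emptyset'$. Discharging this final case precisely---weaving the Arslanov argument into the parameterized recursion so that the mismatch sub-case is also excluded---is the main technical step of the proof.
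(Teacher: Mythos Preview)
Your outline has a genuine gap at the point you yourself flag as ``the main technical step,'' and the construction you set up does not lead there. In your race, as soon as some $\delta_{s_0}(f(n))\darrow$ appears, $\vph_{f(n)}$ is \emph{irrevocably} committed to following $\vph_{\delta_{s_0}(f(n))}$. If this computation is later invalidated by an $A$-change below its use, $f(n)$ is now stuck following an arbitrary index; in particular \eqref{totalize2} may simply fail for this $n$ even when $\psi(n)\darrow$ later. Your hope that ``a systematic occurrence of such invalidations would allow one to compute $\emptyset'$ from $A$'' is not supported by the construction: invalidations are caused purely by elements entering $A$, and nothing in your definition of $f$ refers to $\emptyset'$ at all, so there is no handle by which to extract information about $\emptyset'$. (The relativized Proposition~\ref{domain} you invoke does not help here either.)

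The paper's proof is structured quite differently. It argues by contradiction: assuming no computable $f$ totalizes $\psi$ avoiding $\delta$, it builds, \emph{uniformly in a parameter $x$}, a computable $f_x$ that tries to totalize $\psi$ while watching for $\delta$ to engage. When a candidate computation $\delta_s(f_x(n))\darrow$ appears with $\psi_s(n)\uarrow$, $f_x$ merely \emph{freezes} (it does not yet follow $\delta$); if the computation is later seen to be fake, $f_x$ unfreezes and resumes totalizing $\psi$. The only event that causes $f_x(n)$ to follow $\delta_s(f_x(n))$ is $x$ entering $\emptyset'$. This is exactly the missing link: with $A$ one can recognize permanent $\delta$-computations, and since a permanent engagement must eventually occur (else $f_x$ would succeed), one can decide $x\in\emptyset'$ by whether $f_x$ ever followed. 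To repair your approach you would need to (i) parameterize the construction by $x$, (ii) replace the immediate commitment to $\delta_{s_0}$ by a freeze/unfreeze mechanism that survives fake computations, and (iii) make $x\in\emptyset'$ the trigger for following $\delta$. Once those changes are made, you essentially recover the paper's argument.
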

\begin{proof}
Suppose $A$ is a c.e.\ set, and suppose that $\delta$ is a 
partial $A$-computable FPF function for which the statement of 
the theorem does not hold, i.e.\ there exists a $\psi$ such that 
all $f$ fail to totalize $\psi$ avoiding~$\delta$. 
Without loss of generality, $\psi$ is universal by the same 
reasoning as in Theorem~\ref{ADNuniform}.
We prove that $\emptyset' \leq_T A$.

Given $x$, we define a computable function $f$ in such a way that the code 
of $f$ depends uniformly on~$x$. Later we will decide with $A$ whether 
$x\in \emptyset'$ depending on what happens with~$f$. 

We say that $f$ totalizes $\psi$ at stage $s$ if for all $n\leq s$
\begin{align*}
\psi_s(n)\darrow \; &\Longrightarrow \; W_{f(n),s}= W_{\psi(n),s} \\
\psi_s(n)\uarrow \; &\Longrightarrow \; W_{f(n),s}= \emptyset.
\end{align*}

Suppose that $\delta = \{e\}^A$, and consider the approximation
$$
\delta_s(x) = \{e\}^{A_s}_s(x).
$$
Note that if $\delta_s(x)\darrow$ the computation does not have to be 
permanent; it can change later if $A$ changes below the use, i.e.\ if 
$A_s\restr u \neq A\restr u$ for the use $u$ of this computation.
Also, it is possible that $\delta_s(x)\darrow$ infinitely often, but 
$\delta(x)\uarrow$.
Now if $\delta_s(x)\darrow$, and $A_s\restr u = A\restr u$,
then we know that the computation is permanent, 
i.e.\ $\delta_s(x) = \delta(x)$.
Because $A$ is c.e., we can see with $A$ whether a computation
$\delta(x)$ is permanent by checking that $A$ is permanent below the use.
So if we know that $\delta(x)\darrow$ for certain $x$, there have to
be stages $s$ where this computation is permanent, and hence we can find
such $x$ computably in $A$. 

Below, we will also search without $A$ for $x$ and $s$ such that 
$\delta_s(x)\darrow$ appears to be permanent. 
A complication here is that if $x$ is the least 
such number, there may be $y<x$ such that $\delta_t(y)\darrow$ at 
every stage $t$, but $\delta(y)\uarrow$, so our search never 
reaches~$x$.\footnote{Even if we use the convention, customary in 
infinite injury arguments, that $\delta_t(y)\uarrow$ for at least one 
stage $t$ after the computation changes, there might be several 
$y<x$ taking turns in preventing us to reach~$x$.}
To prevent this, we use a special order of search $L$
in which every number occurs infinitely often. 

{\em Construction of~$f$.} 
Given $x$, we define $f$ as follows. 
To make $f$ total, at stage $s=0$ we define 
$W_{f(n),0}=\emptyset$ for all~$n$. 

Fix a computable order $L$ of the natural numbers in which every 
number occurs infinitely often, for example
$$
1, 1, 2, 1, 2, 3, 1, 2, 3, 4, \ldots, 1, 2, 3, \ldots n, \ldots
$$
At stage $s$, we search for the $L$-least $n$ with $n\leq s$ 
(in the usual order) such that 
\begin{equation} \label{lock}
\psi_s(n)\uarrow \text{ and } \delta_s(f(n))\darrow,  
\end{equation}
and such that this copy of $n$ in $L$ was not previously discarded. 
If no such $n$ is found, we let $f$ totalize $\psi$ at stage~$s$.

If $n$ is found, we {\em freeze\/} $f$, meaning that for subsequent stages $t$, 
as long as $f$ is frozen we have $W_{f(n),s} = W_{f(n),t}$ for every~$n$.

We freeze $f$ until (if ever) we find a stage $t>s$ such that one of the 
following happens:

\begin{enumerate}[\rm (i)]

\item[$\bullet$] 
At stage $t$ we see that $\delta_s(f(n))$ was not permanent, 
either because $\delta_t(f(n))\uarrow$ or because $A$ has changed 
below the use of the computation. 
Then we unfreeze $f$ by letting $f$ totalize $\psi$ at stage~$t$, 
we discard this copy of $n$ in $L$, 
and search for a new stage $s$ and $n$ as in~\eqref{lock}.

\item[$\bullet$] 
$x\in \emptyset_t'$.
Since $W_{f(n),s} = \emptyset$ (because $\psi_s(n)\uarrow$),
we can let $f(n)$ follow $\delta_s(f(n))$, meaning that
$W_{f(n)} = W_{\delta_s(f(n))}$.
(Note that this makes the definition of $f(n)$ circular, so formally
we are applying the recursion theorem with parameters
(Theorem~\ref{recthmparam}) here.)

\end{enumerate}
This concludes the construction of~$f$. We make the following observations. 

To fail the statement of the theorem, $\delta$ has to kill all 
functions $f$ totalizing $\psi$, (i.e.\ such that \eqref{totalize2} holds), 
by making \eqref{avoid2} fail. 
This means that $\delta$ has to {\em engage\/} some $f(n)$
with $\psi(n)\uarrow$ by becoming defined on it. 
Now as long as this has not happened, $f$ is totalizing $\psi$, so 
sooner or later $\delta(f(n))\darrow$ has to happen for some $n$, 
meaning that $\delta_s(f(n))\darrow$ is permanent from some stage 
$s$ onwards. 
Note that there may be fake computations $\delta_s(f(n))\darrow$, 
but every time such a computation is found to be fake, 
$f$ proceeds to totalize $\psi$ for at least one stage, thus 
keeping the pressure on~$\delta$.

Now if $x\notin\emptyset'$, then $f$ settles on such a permanent 
computation $\delta_s(f(n))$, meaning that at some stage $s$, 
$n$ as in \eqref{lock} is found, $f$ is frozen, and never changes 
after that. 
This is because of our special search procedure using the order $L$: 
If there are fake computations $\delta_t(y)\darrow$ with $y<f(n)$, 
then these are all discarded after finitely many stages. 
It may be that $y<f(n)$ and $\delta_t(y)\darrow$ for infinitely many~$t$, 
but every time $f$ freezes on such a computation, and this computation
changes later, a copy of $y$ in $L$ is discarded, and only finitely many 
copies of $y$ occur before $f(n)$ in~$L$.

{\em The decision procedure for $\emptyset' \leq_T A$.}
Given $x$, we decide whether $x\in\emptyset'$ using $A$. 
Note that the definition of $f$ above uniformly depends on~$x$
(meaning that there is a computable function $h$ such that 
$f = \vph_{h(x)}$ for every~$x$).  
Following the definition of $f$, we search for a point $f(n)$
where $\delta$ engages $f$ by becoming defined. 
As pointed out above, such a point exists, since otherwise 
$f$ would totalize $\psi$, and also \eqref{avoid2} would hold. 
Hence we find a stage $s$ and $n$ such that 
$\delta_s(n)\darrow$ and $f$ is frozen at stage~$s$. 
If the computation $\delta_s(n)\darrow$ is not permanent
(which we can see by checking $A_s\restr u \neq A\restr u$, 
with $u$ the use of the computation), then we search on. 
Continuing in this way, we either find a stage~$s$ 
with $x\in \emptyset'_s$, or a pair $n$ and $s$ such that 
the computation $\delta_s(f(n)) = \delta(f(n))\darrow$ is permanent. 

In the first case we know of course that $x\in\emptyset'$, 
and we are done. 
Note that it could happen that $x$ enters $\emptyset'$ at 
a stage $t$ where $f$ is frozen on a nonpermanent computation 
$\delta_s(f(n))$, and hence $f(n)$ starts to follow 
this nonpermanent computation $\delta_s(f(n))$. In that 
case there does not need to be any final permanent computation 
$\delta(f(n))$. 
But this is immaterial for our case distinction: We either find 
that $x\in \emptyset'$ or we find a permanent computation $\delta_s(f(n))$.

In the second case we know that $f$ is eventually frozen on the 
permanent computation $\delta_s(f(n))$, and that it will never 
change to another~$n$. In this case we also know that 
$x\notin\emptyset'$, for otherwise $f(n)$ would start to follow 
$\delta_s(f(n))$, and hence $\delta_s(f(n))$ would have to change. 
Thus we can decide $x\in\emptyset'$ for every~$x$.
\end{proof}

Note that for computable $A$, the statement of Theorem~\ref{joint}
is the same as Visser's ADN theorem. 
Also, the theorem implies Arslanov's completeness criterion, 
since $f$ as in the theorem cannot exist if $\delta$ is total, so that 
in particular any total $A$-computable $\delta$ cannot be FPF, 
hence must have a fixed point. 

As we have seen, the recursion theorem is effective, in the sense
that the fixed point from the statement of the theorem can be 
found effectively. This is the content of the recursion theorem 
with parameters (Theorem~\ref{recthmparam}).
One may wonder if the same holds for the ADN theorem 
(when appropriately formulated), Arslanov's completeness criterion, 
or their joint generalization Theorem~\ref{joint}.
Note that the ADN theorem can be proved using the recursion theorem with
parameters (see the proof given in Section~\ref{sec:ADN} above). 
It stands to reason that its generalized version might be provable from
a parameterized version of Arslanov's completeness criterion.
However, this does not work: As it turns out, neither the ADN theorem nor 
Arslanov's completeness criterion has a version with parameters.
This is proven in \cite{Terwijn}. 
As there are no parameterized versions of these theorems, a fortiori, 
neither is there such a version for their joint generalization. 

Finally, we remark that the ADN theorem is effective in the following sense. 
We have already seen that it is uniform in a code of $\psi$ 
(as observed in Theorem~\ref{ADNuniform}).
Also, from the proof of the ADN theorem in section~\ref{sec:ADN},
it is clear that the code of the function $f$ depends
effectively on a code of~$\delta$.
Hence the result is uniform in both $\psi$ and~$\delta$.
A similar uniformity does not hold for the joint generalization
Theorem~\ref{joint}: It is still uniform in $\psi$, but not in 
codes for $A$ and $\delta$, as is proven in \cite{Terwijn}. 
Hence the proof of Theorem~\ref{joint} given above is 
necessarily nonuniform.

\end{document}